 \numberwithin{equation}{section}
\begin{document}
	\newcommand{\R}{\mathbb{R}}
	\newcommand{\RN}{\mathbb{R}^N}
	\newcommand{\supe}{\textup{ess sup}}
	\newcommand{\uu}{{\bf u}}
	\newcommand{\w}{{\bf w}}
	\newcommand{\g}{{\bf f}}
	\newcommand{\fn}{\exp}
	\newcommand{\irn}{{\int_{\RN}}}
	\newcommand*{\avint}{\mathop{\ooalign{$\int$\cr$-$}}}
		\newcommand{\qdz}{Q_{\delta_0}(0)}
	\newcommand{\bdz}{B_{\delta_0}(0)}
	\newcommand{\id}{\int_{Q_{\delta_0}(0)}}
	\newcommand{\iqo}{\int_{Q_{1}(0)}}
	\newcommand{\ibo}{\int_{B_{1}(0)}}
	\newcommand{\ibd}{\int_{B_{\delta_0}(0)}}
	\newcommand{\io}{\int_{\Omega}}
	\newcommand{\iot}{\int_{\Omega_{\tau}}}
	\newcommand{\ioT}{\int_{\Omega_{T}}}
	\newcommand{\iq}{\int_{Q_{r}(z)}}
	\newcommand{\iqh}{\int_{Q_{\frac{r}{2}}(z)}}
	\newcommand{\ib}{\int_{B_r(y)}}
	\newcommand{\ibz}{\int_{B_1(0)}}
	\newcommand{\ibh}{\int_{B_{\frac{r}{2}}(y)}}
	\newcommand{\ibk}{\int_{B_{r_k}(y_k)}}
	\newcommand{\iqk}{\int_{Q_{r_k}(z_k)}}
	\newcommand{\mzk}{m_{z_k,r_k}}
	\newcommand{\qyr}{Q_r(y,\tau) }
	\newcommand{\qzr}{Q_r(z) }
	\newcommand{\qyrk}{Q_{r_k}(y_k,\tau_k) }
	\newcommand{\qzrk}{Q_{r_k}(z_k) }
	\newcommand{\er}{E_r(y,\tau) }
	\newcommand{\ezr}{E_r(z) }
	\newcommand{\erk}{E_{r_k}(y_k,\tau_k) }
	\newcommand{\ezk}{E_{r_k}(z_k) }
	\newcommand{\byr}{B_r(y) }
	\newcommand{\byrk}{B_{r_k}(y_k) }
	\newcommand{\aiy}{\avint_{B_r(y)}  }
	\newcommand{\air}{\avint_{B_\rho(y)}  }
	\newcommand{\aizr}{\avint_{Q_\rho(z)}  }
	\newcommand{\aiz}{\avint_{Q_r(z)}  }
	\newcommand{\aiR}{\avint_{B_R(y)}  }
	\newcommand{\aiyk}{\avint_{B_{r_k}(y_k)}  }
	\newcommand{\aizk}{\avint_{Q_{r_k}(z_k)}  }
	\newcommand{\ykr}{y_k+r_ky}
	\newcommand{\tkr}{\tau_k+r_k^2\tau}
	\newcommand{\maxt}{\max_{t\in[\tau-\frac{1}{2}r^2, \tau+\frac{1}{2}r^2]}}
	\newcommand{\mat}{\max_{t\in[\tau-\frac{1}{2}r^2, \tau+\frac{1}{2}r^2]}}
	\newcommand{\ftq}{\|f\|_{q,\ot}}
	\newcommand{\maxd}{\max_{\tau\in[-\frac{1}{2}\delta^2, \frac{1}{2}\delta^2]}}
	\newcommand{\maxn}{\max_{\tau\in[-\frac{1}{2}, \frac{1}{2}]}}
	\newcommand{\maxtk}{\max_{t\in[\tau_k-\frac{1}{2}r_k^2, \tau_k+\frac{1}{2}r_k^2]}}
	\newcommand{\maxth}{\max_{t\in[\tau-\frac{1}{8}r^2, \tau+\frac{1}{8}r^2]}}
	\newcommand{\mnp}{(\m\cdot\nabla p) }
	\newcommand{\nsk}{(n_k\cdot\nabla \sk) }
	\newcommand{\ot}{\Omega_T }
	\newcommand{\mrt}{\m_{y,r}(t) }
	\newcommand{\mz}{\m_{z,r} }
	\newcommand{\prt}{p_{y,r}(t) }
	\newcommand{\pkt}{p_{y_k,r_k}(\tkr) }
	\newcommand{\wks}{|w_{k}|^2 }
	\newcommand{\sk}{\psi_{k} }
	\newtheorem{theorem}{Theorem}[section]
	\newtheorem{corollary}{theorem}[section]
	\newtheorem*{main}{Main Theorem}
	\newtheorem{lemma}[theorem]{Lemma}
	\newtheorem{clm}[theorem]{Claim}
	\newtheorem{proposition}{Proposition}[section]
	\newtheorem{conjecture}{Conjecture}
	\newtheorem*{problem}{Problem}
	\theoremstyle{definition}
	\newtheorem{definition}[theorem]{Definition}
	\newtheorem{remark}{Remark}[section]
	\newtheorem*{notation}{Notation}
	\newcommand{\rn}{\mathbb{R}^N}
	\newcommand{\gr}{G_\rho}
	\newcommand{\pr}{\phi_\rho}
	\newcommand{\br}{B_{\rho}}
	\newcommand{\mr}{\m_{\rho}}
	\newcommand{\thr}{\theta_{\rho}}
	\newcommand{\f}{{\bf g}}
	\newcommand{\aoo}{1+m_1^2(x_0,t_0)}
	\newcommand{\eps}[1]{{#1}_{\varepsilon}}

	%% Place the running title of the paper with 40 letters or less in []
	%% and the full title of the paper in { }.
	\title[A class of parabolic equations
	] %Use the shortened version of the full title
	{Logarithmic upper bounds for weak solutions to a class of parabolic equations}
	%Existence of classical solutions to a PDE system with cubic nonlinearity
	% Place all authors' names in [ ] shown as running head, Leave { } empty
	% Please use `and' to connect the last two names if applicable
	% Use FirstNameInitial.  MiddleNameInitial. LastName, or only last names of authors if there are too many authors
	\author[Xiangsheng Xu]{}
	
	% It is required to enter 2010 MSC.
	\subjclass{Primary: 35K20, 35B45,  35D30, 35B50.}
	% Please provide minimum  5 keywords.
	\keywords{  Logarithmic upper bounds, Moser's iteration technique, uniform bounds for weak solutions of parabolic equations. 
	%To appear in	Proc. Roy. Soc. Edinburgh Sect. A.
			{\it Proc. Roy. Soc. Edinburgh Sect. A}, to appear.
	}
	
	% Email address of each of all authors is required.
	% You may list email addresses of all other authors, separately.
	\email{xxu@math.msstate.edu}
	%\email{email2@aimSciences.org}
	%\email{email3@ece.pdx.edu}
	
	% Put your short thanks below. For long thanks/acknowlegements,
	%please go to the last acknowlegments section.
	%\thanks{The first author is supported by NSF grant xx-xxxx}
	
	% Add corresponding author at the footnote of the first page if it is necessary. 
	% Plase add $^*$ adjacent to the corresponding author's name on the first page. 
	% The example shown in this template is if the first author is the corresponding author.
	%\thanks{$^*$ Corresponding author: xxxx}
	
	\maketitle

	% Enter the first author's name and address:
	\centerline{\scshape Xiangsheng Xu}
	\medskip
	{\footnotesize
		% please put the address of the first author
		\centerline{Department of Mathematics \& Statistics}
		\centerline{Mississippi State University}
		\centerline{ Mississippi State, MS 39762, USA}
	} % Do not forget to end the {\footnotesize by the sign }

	\bigskip

	% The name of the associate editor will be entered by an editorial staff
	% "Communicated by the associate editor name" is not needed for special issue.
	% \centerline{(Communicated by the associate editor name)}
	
	\begin{abstract}
		It is well known that a weak solution $\varphi$ to the initial boundary value problem for 
		the uniformly parabolic equation $\partial_t\varphi-\mbox{div}(A\nabla \varphi) +\omega\varphi= f $ in
		$\Omega_T\equiv\Omega\times(0,T)$ satisfies the uniform estimate
		$$
		\|\varphi\|_{\infty,\Omega_T}\leq \|\varphi\|_{\infty,\partial_p\Omega_T}+c\|f\|_{q,\Omega_T}, \ \ \
		c=c(N,\lambda, q,  \Omega_T),
		$$
provided that $q>1+\frac{N}{2}$, where $\Omega$ is a bounded domain in $\RN$ with
			Lipschitz boundary, $T>0$, $\partial_p\Omega_T$ is the parabolic boundary of $\Omega_T$, $\omega\in L^1(\Omega_T)$ with $\omega\geq 0$, and $\lambda$ is the smallest eigenvalue of the coefficient matrix $A$. This estimate is sharp in the sense that it generally fails if  $q=1+\frac{N}{2}$. In this paper we show that the linear growth of this upper bound in $\|f\|_{q,\Omega_T}$ can be improved. To be precise, we establish
			\begin{equation*}
			\|\varphi\|_{\infty,\Omega_T}\leq
			\|\varphi_0\|_{\infty,\partial_p\Omega_T}+c\|f\|_{1+\frac{N}{2},\Omega_T}\left(\ln(\|f\|_{q,\Omega_T}+1)+1\right).
			\end{equation*}	
	\end{abstract}

\section{Introduction}
Let $\Omega$ be a bounded domain in $\RN$ with Lipschitz
boundary $\partial \Omega$. 
%Assume that $\Gamma_D$ is a non-empty
%open subset of $\partial \Omega$. 
For each $T>0$ consider the initial boundary value
problem
\begin{eqnarray}
\partial_t\varphi-\mbox{div}(A\nabla \varphi)+\omega \varphi & = & f\ \ \mbox{in}\ \
\Omega_T\equiv\Omega\times(0,T),\label{par1}\\
\varphi & = & 0 \ \ \mbox{on $\Sigma_T\equiv\partial\Omega\times(0,T)$,}\label{par2}\\
\varphi(x,0)
&=& \varphi_0(x) \ \ \mbox{on $\Omega$.}\label{par3}
\end{eqnarray}
We assume:
\begin{enumerate}
	\item[(H1)] $A=A(x,t)$ is an $N\times N$ matrix whose entries $a_{ij}(x,t)$
	satisfy
	\begin{equation}
	a_{ij}(x,t)\in L^\infty(\ot),\ \ \lambda|\xi|^2\leq \left(A(x,t)\xi\cdot\xi\right)\
	\ \ \mbox{for $\xi\in \RN$ and a.e. $(x,t)\in\ot$},
	\end{equation} where $\lambda >0$;
	\item[(H2)] $\omega \in L^{1}(\ot)$ with $\omega\geq 0$ a.e. on $\ot$, $f\in L^{q}(\ot)$ for some $q>1+\frac{N}{2}$, and
	$\varphi_0\in L^\infty(\Omega)$.
\end{enumerate}
%Here  As for other given data in the problem we have that 
%, and $\nu$ is the unit outward normal to $\partial \Omega$.
In the situation considered here, the classical theory for uniformly parabolic equations \cite{LSU}
asserts that there is a unique weak solution $\varphi$ in the space
$L^2(0,T;W^{1,2}_0(\Omega))\cap L^\infty(\ot)$ and we have the estimate 
%$\varphi$ satisfies
\begin{equation}
\|\varphi\|_{\infty,\ot}\leq \|\varphi_0\|_{\infty,\Omega}+c\|f\|_{q,\ot}, \ \ \
c=c(N,\lambda, q,  \ot).
\end{equation}
%Here and in what follows we denote by $\|f\|_p$ the norm of $f$ in $L^p(\Omega)$. 
The result is sharp in the sense that if $q=1+\frac{N}{2}$
then the above inequality fails in general. Our objective here is to improve
this upper bound for $\|\varphi\|_{\infty,\ot}$. 
To be precise, we will show:
\begin{theorem}\label{logb} Let \textup{(H1)-(H2)} be satisfied and $\varphi$ be a weak solution to
	\eqref{par1}-\eqref{par3} in the space $L^2(0,T;W^{1,2}_0(\Omega))$.
	%Under the above assumptions
	Then there is a positive number $c=c(N,\lambda, q, \ot)$ such
	that
	\begin{equation}\label{ppb}
	\|\varphi\|_{\infty,\ot}\leq
	\|\varphi_0\|_{\infty,\Omega}+c\|f\|_{1+\frac{N}{2},\ot}\left(\ln(\ftq+1)+1\right).
	\end{equation}	
\end{theorem}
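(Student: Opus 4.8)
The plan is to run a Moser-type iteration that is tracked quantitatively in the two norms $\|f\|_{1+\frac N2,\ot}$ and $\|f\|_{q,\ot}$ simultaneously, and then to optimize the number of iteration steps. Write $p_0=1+\frac N2$ and $A=\|f\|_{p_0,\ot}$, $B=\|f\|_{q,\ot}$. After subtracting $\|\varphi_0\|_{\infty,\Omega}$ and using the sign condition $\omega\ge 0$ (so that the zero-order term can be dropped after testing against powers of the truncated solution, exactly as in the classical $L^\infty$ bound), it suffices to bound $\|\varphi^+\|_{\infty,\ot}$ by the right-hand side of \eqref{ppb}. I would test \eqref{par1} with $\big((\varphi-k)^+\big)^{2\beta-1}$ for increasing exponents $\beta_j$, combine the parabolic energy estimate with the Gagliardo–Nirenberg–Sobolev embedding in the anisotropic (parabolic) scaling to get the self-improving inequality on the level sets $A_k=\{\varphi>k\}$, and control the forcing term $\int f\,((\varphi-k)^+)^{2\beta-1}$ by Hölder with exponent $q$ on the $f$-factor. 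The key point is that each iteration step contributes a multiplicative constant of the form $c\,\chi^{\,j}$ (with $\chi=1+\frac{2}{N}$ the Sobolev gain) together with an additive term involving $B^{1/\beta_j}$ times a negative power of the measure $|A_k|$; because $q>p_0$, that measure factor carries a strictly positive power of $|A_k|$, which is what makes the iteration converge and is precisely where the extra integrability beyond $p_0$ is spent.

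Concretely, I expect to arrive, after $m$ iteration steps starting from the $L^{2p_0}$-level, at an estimate of the shape
\begin{equation*}
\|\varphi^+\|_{\infty,\ot}\le c\,A\;+\;c\,A\sum_{j=0}^{m-1}\Big(\tfrac{B}{A}\Big)^{\!\theta_j}\,|A_0|^{\,\gamma_j}\;+\;c^{\,m}\,\chi^{\,m^2}\,\|\varphi^+\|_{2p_0,\ot}\,|A_0|^{-\sigma_m},
\end{equation*}
where $\theta_j\to 0$, $\gamma_j>0$ are summable, and $\sigma_m$ is bounded. Since $\|\varphi^+\|_{2p_0,\ot}$ and $|A_0|=|\ot|$ are under control, the only dangerous piece is $\big(B/A\big)^{\theta_j}$; summing the geometric-type series in $j$ and then taking $m\sim \ln(B/A+1)$ — i.e. stopping the iteration after logarithmically many steps rather than infinitely many — turns the factor $\big(B/A\big)^{\theta_j}$, which is roughly $\exp(\theta_j\ln(B/A))$ with $\theta_j$ decaying geometrically, into an overall factor of size $c\,\big(\ln(B/A+1)+1\big)$, while the tail term $c^m\chi^{m^2}\|\varphi^+\|_{2p_0,\ot}$ is reabsorbed because after $m\sim\ln(B/A+1)$ steps one still controls a high-but-finite $L^r$ norm of $\varphi^+$ by the classical theory (or by a preliminary crude application of the sharp estimate with exponent $q$, which costs only one power of $\ln$). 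Multiplying through by $A=\|f\|_{p_0,\ot}$ and restoring $\|\varphi_0\|_{\infty,\Omega}$ yields \eqref{ppb}.

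The main obstacle is the bookkeeping of constants across the truncated iteration: one must choose the exponent sequence $\beta_j$ and the truncation levels $k_j$ so that (i) the Sobolev/interpolation constants accumulate only like $c^m\chi^{m^2}$, which is subexponential in $m^2$ and hence harmless once $m\sim\ln(B/A)$; (ii) the negative powers of $|A_{k_j}|$ produced by Hölder against $f\in L^q$ stay summable and do not blow up as $\beta_j\to\infty$; and (iii) the stopping index $m$ is matched to $\ln(B/A+1)$ so that the finitely many surviving $\big(B/A\big)^{\theta_j}$ factors multiply out to a single power of the logarithm rather than a power of $B/A$. A secondary technical point is handling the time-derivative term rigorously (Steklov averages, or working with the weak formulation tested against truncated powers as in \cite{LSU}) and the initial-time contribution, both of which are standard once the test function is admissible. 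I expect the truncation-and-stopping optimization in step (iii) to be the genuinely new ingredient; everything else is a careful quantitative rerun of the classical argument.
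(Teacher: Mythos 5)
There is a genuine gap, and it sits exactly where the logarithm is supposed to be produced. First, the structural inequality you "expect to arrive at" is incoherent as stated: a Moser/De Giorgi iteration stopped after $m$ steps controls only an $L^{p_m}$ norm with $p_m$ finite, so $\|\varphi^+\|_{\infty,\Omega_T}$ cannot appear on the left-hand side; to reach $L^\infty$ you must run the iteration to its limit, and then the factors $(B/A)^{\theta_j}$ do not "multiply out to a single power of the logarithm" --- they multiply out to $(B/A)^{\sum_j\theta_j}$, a fixed positive power of $B/A$, because $\sum_j\theta_j$ converges to a positive constant (this is precisely why the classical argument gives a bound linear in $\|f\|_{q,\Omega_T}$, not in $\|f\|_{1+\frac N2,\Omega_T}$). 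If instead you truly stop after $m\sim\ln(B/A+1)$ steps, the surviving anchor norm is a high $L^{p_m}$ norm of $\varphi^+$, and your proposed way of absorbing it ("by the classical theory, or by a preliminary crude application of the sharp estimate with exponent $q$") reintroduces a full factor of $B=\|f\|_{q,\Omega_T}$, not a power of its logarithm, which destroys the estimate. Nothing in your bookkeeping converts a power of $B/A$ into $\ln(B/A)$.

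What is missing is an ingredient that exploits the normalization by $A=\|f\|_{1+\frac N2,\Omega_T}$ to control the anchor norm \emph{uniformly in} $\|f\|_{q,\Omega_T}$. In the paper this is done by setting $u=\varphi_1/\max\{A,1\}$, $g=f/\max\{A,1\}$ (so $\|g\|_{1+\frac N2,\Omega_T}\le 1$), proving the exponential moment bound $\int_{\Omega_T}e^{\alpha(1+\frac 2N)u}\,dx\,dt\le c$ via the test function $e^{\alpha u}-1$ (Lemma \ref{expb}), and then performing the substitution $v=e^u$, $w=\max\{v,1\}$: the forcing becomes multiplicative ($gv$), the Moser iteration for $w$ yields a \emph{power} bound $\|w\|_{\infty,\Omega_T}\le c(\|g\|_{q,\Omega_T}+1)^{\alpha_0 r/\alpha+1/\alpha}$ with the low norm $\|w\|_{\alpha,\Omega_T}$ controlled by the exponential moment, and the logarithm appears only at the very end upon taking $\ln$ of this power bound, since $u=\ln v$. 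An alternative route closer in spirit to your "stopping" idea would be to show $\|u^+\|_{p,\Omega_T}\le Cp$ for all $p$ (which is equivalent to the exponential moment bound and again uses $\|g\|_{1+\frac N2,\Omega_T}\le 1$), run the full iteration starting at level $p$ so that $\|g\|_{q,\Omega_T}$ enters with exponent $O(1/p)$, and then optimize $p\sim\ln(\|g\|_{q,\Omega_T}+e)$; but even this variant requires the exponential-integrability estimate, which your proposal never establishes. As written, the truncation-and-stopping optimization you identify as the "genuinely new ingredient" does not produce the claimed logarithmic factor.
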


Similar results for functions in $W^{s,q}(\RN), sq>N,$ have been established
in \cite{BW,KT}. This theorem can be viewed as the parabolic version of the result in \cite{X1}. As in \cite{X1}, we introduce a change of variable.  The equation satisfied by the new unknown function $v$
has the expression %there appears the term $gv$. 
\begin{eqnarray}
	\partial_tv-\mbox{div}(A\nabla v) +\frac{1}{v}A\nabla v\cdot\nabla
v+\omega v\ln v& = & gv\ \ \mbox{in $\ot$.}
\label{eqnv1}
\end{eqnarray}
%See \eqref{eqnv1} below. 
To prove Theorem \ref{logb}, we need to have an estimate like
\begin{equation}\label{18fm1}
\supe_{\ot} |v|\leq c\left(\|g\|_{q,\ot}+1\right)^\alpha,\ \ c, \alpha>0,
\end{equation} 
%$\supe_{\ot} |v$
%be bounded by a power function of $\|g\|_{q,\ot}+1$, 
where $q$ is given as in (H2). 
%Obviously, the difficulty is due to 
%To do this, we must overcome the difficulty caused by 
%the fact that
	%Before we continue, we would like to remark that 
%	Since 
	%	In the parabolic case, t
The approach in (\cite{LSU}, p.185) is to first show the boundedness of $v$ for small $t$ and then extend the result to large $t$. Unfortunately, 
this method does not serve our purpose because
%the same issue arises. That is, 
how $\supe_{\ot}|v|$ is bounded by $\|g\|_{q,\ot}$ becomes unclear. By modifying Moser's technique of iteration of $L^q$ norms (see \cite{M}), we are able to obtain \eqref{18fm1}, i.e., 
%that
	%an explicit dependence of 
	$\supe_{\ot}|v|$ is bounded by a power function of $\|g\|_{q,\ot}+1$. 
	%This is the essential part of our proof of Theorem \ref{logb}.  
	Even though our proof here is inspired by
	the work of \cite{X1} in the elliptic case, we must overcome the complication caused by the time variable. This constitutes the core of our analysis.
	
	Observe that since the coefficient function $``g"$  in equation \eqref{eqnv1}
%	in \eqref{eqnv1} 
	is of arbitrary sign  this  equation, in essence, resembles the parabolic version of the Schr\"{o}dinger equation in \cite{CFG}. In the elliptic case \cite{CFG}, one can assume that $g$ belongs to the 
	slightly more general Kato class, instead of $L^p(\Omega)$ with $p>\frac{N}{2}$. However, this results in the loss of explicit dependence of $\supe|v|$ on $g$. That is, how  $\supe|v|$ is bounded by a certain norm of $g$ is hidden. This explains why an elliptic version of Theorem \ref{logb} is established in \cite{X1} only under the assumption that $g\in L^p(\Omega)$ with $p>\frac{N}{2}$.
	 %In particular, w
	 
	 We also would like to point out that the constant $c$ in \eqref{ppb} does not depend on the upper bounds of our elliptic coefficients $a_{ij}(x,t)$. This opens the possibility that Theorem \ref{logb} be
	 %is also 
	 applicable to certain types of
the so-called singular parabolic equations.

Our result can also be extended to more general equations by suitably modifying our proof. The inequality \eqref{ppb} can also have different versions. For example, the interested reader might want to try to establish the following
%One such example is 
	\begin{equation}\label{ppbb}
\|\varphi\|_{\infty,\ot}\leq
\|\varphi_0\|_{\infty,\Omega}+c\sup_{0\leq t\leq T}\|f\|_{\frac{N}{2},\Omega}\left(\ln(\sup_{0\leq t\leq T}\|f\|_{q,\Omega}+1)+1\right),\ \ \ q>\frac{N}{2}.
\end{equation}	
We refer the reader to \cite{LSU} for more information on how to balance the integrability of $f$ in the time variable and that of the space variables.
%We encourage the interested reader to infer the above inequality from the proof of Theorem
%\ref{logb}.
%do not have to be bounded.
%We may view our result here the PDE version of that
%Let $\Omega$ be a bounded domain in $\mathbb{R}^N$ and $T$ a positive number. Set $\ot=\Omega\times(0,T)$. We study the behavior of solutions of the system

The rest of the paper is dedicated to the proof of Theorem \ref{logb}. In our proof we will assume that the space dimension $N$ is bigger than $2$ due to an application of the Sobolev embedding theorem. Obviously, if $N=2$, we must take the following version of the the Sobolev embedding theorem: for each $s>2$ there is a positive number $c_s$ such that
\begin{eqnarray}
\int_{\Omega_T}|v|^{2+\frac{2(s-2)}{s}}dxdt&\leq& %\int_{0}^{T}\left(\int|v|^sdx\right)^{\frac{2}{s}}\left(\int_{\Omega}|v|^2dx\right)^{\frac{s-2}{s}}dt\nonumber\\
%&\leq& 
c_s^2\left(\sup_{0\leq t\leq T}\int_{\Omega}|v|^2dx\right)^{\frac{s-2}{2}}\int_{\Omega_T}|\nabla v|^{2}dxdt
\end{eqnarray}
for each $v\in L^\infty(0,T; L^2(\Omega))\cap L^2(0,T; W^{1,2}_0(\Omega))$.
Then all our arguments in the proof of Theorem \ref{logb} remain valid, i.e., Theorem \ref{logb} still holds for $N=2$.
% via a slight modification of our proof.
\section{Proof of Theorem \ref{logb}}

%result.
\begin{proof}[Proof of Theorem \ref{logb}] We decompose $\varphi$ into
	$\varphi_1+\varphi_2$, where $\varphi_1$ and $\varphi_2$ are the
	respective solutions of the following two problems
	\begin{eqnarray}
	\partial_t\varphi_1-\mbox{div}(A\nabla \varphi_1)+\omega \varphi_1 & = & f\ \ \mbox{in}\ \
	\ot,\label{phi11}\\
	\varphi_1 & = & 0 \ \ \mbox{on $\Sigma_T$,}\label{phi12}\\
	\varphi_1(x,0)
	&=& 0 \ \ \mbox{on $\Omega$ and}\label{phi13}
	\end{eqnarray}
	\begin{eqnarray}
	\partial_t\varphi_2-\mbox{div}(A\nabla \varphi_2)+\omega \varphi_2 & = & 0\ \ \mbox{in}\ \
	\ot,\label{phi21}\\
	\varphi_2 & = & 0 \ \ \mbox{on $\Sigma_T$,}\label{phi22}\\
	\varphi_2(x,0)
	&=& \varphi_0(x) \ \ \mbox{on $\Omega$.}\label{phi23}
	\end{eqnarray}
	Let $c_s$ be the
	smallest positive number such that
	\begin{equation}\label{sin}
	\|\phi\|_{\frac{2N}{N-2},\Omega}\leq c_s\|\nabla \phi\|_{2,\Omega}
	\end{equation}
	for all $\phi\in W^{1,2}(\Omega)$ with $\phi=0$ on $\partial\Omega$. It is
	well known that the Sobolev constant $c_s$ here depends only on
	$N$ (\cite{EG}, p. 138) . 
	%If 
%	\begin{equation}
%	\|f\|_{1+\frac{N}{2},\ot}> 1,
%	\end{equation}
	We consider the functions
	\begin{eqnarray}
	u &=& \frac{\varphi_1}{\max\{\|f\|_{1+\frac{N}{2},\ot},1\}},\label{key}\\
	g &=&\frac{f}{\max\{\|f\|_{1+\frac{N}{2},\ot},1\}}.\label{gn}
	\end{eqnarray}
	Obviously, they satisfy
	\begin{eqnarray}
	\partial_tu-\mbox{div}(A\nabla u)+\omega u & = & g\ \ \mbox{in}\ \
	\ot,\label{eqnu1}\\
	u & = & 0 \ \ \mbox{on $\Sigma_T$,}\label{eqnu2}\\
	u(x,0)
	&=& 0 \ \ \mbox{on $\Omega$.}\label{eqnu3}
	\end{eqnarray}
%	Furthermore,
%	\begin{equation}\label{gn1}
%	\|g\|_{1+\frac{N}{2},\ot}\leq 1.
%	\end{equation}
	
	\begin{lemma}\label{expb}Let the assumptions of Theorem \ref{logb} hold.
	%	Assume that $N>2$. Then t
		Then to each $\alpha>0$ sufficiently small
		%\in (0,\frac{4\lambda}{c_s^2})$
		there corresponds a positive number $c=c(N,\alpha, c_s, \lambda, \ot, \|f\|_{1+\frac{2}{N},\ot})$ such that
		\begin{equation}\label{expb1}
		\ioT e^{\alpha \left(1+\frac{2}{N}\right)u}dxdt\leq c.
	%	\sup_{0\leq t\leq T}\io e^{\alpha u}dx+\ioT\left|\nabla e^{\frac{\alpha}{2}u}\right|^2dxdt\leq c\ioT |g|dxdt+c.
		%\int_\Omega \exp{\left(\frac{\alpha Nu}{N-2} \right)} dx\leq c.
		\end{equation}	
	\end{lemma}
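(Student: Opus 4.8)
The plan is to obtain an $L^\infty$-in-time, $L^1$-in-space a priori bound on $u$ and then upgrade it to the claimed exponential integrability by a Moser-type iteration of $L^q$ norms adapted to the parabolic setting. The normalization \eqref{key}--\eqref{gn} is the key structural device: after dividing by $\max\{\|f\|_{1+\frac{N}{2},\ot},1\}$, the source $g$ in \eqref{eqnu1} satisfies $\|g\|_{1+\frac N2,\ot}\le 1$, which is precisely the borderline exponent $1+\frac N2$ that sits at the threshold of the Moser scheme. The zero-order term $\omega u$ has a favorable sign when tested against powers of $u$ of odd order on the set where $u>0$ (and likewise for $u<0$), so it will be dropped or controlled; likewise the initial datum is $0$ by \eqref{eqnu3}, so no boundary contribution appears.

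First I would record the basic energy estimate: testing \eqref{eqnu1} with a power $|u|^{p-2}u$ (times a cutoff in time) and using ellipticity (H1) and the Sobolev inequality \eqref{sin} gives, for each $p\ge 1$, an estimate of the form
\begin{equation*}
\sup_{0\le t\le T}\int_\Omega |u|^p\,dx + \int_{\ot}\big|\nabla |u|^{p/2}\big|^2\,dxdt
\le c\, p\int_{\ot}|g|\,|u|^{p-1}\,dxdt,
\end{equation*}
and then interpolating $|u|^{p(1+\frac2N)}$ between $L^\infty_t L^p_x$ and $L^2_t L^{\frac{2N}{N-2}}_x$ (the parabolic Sobolev/Ladyzhenskaya inequality, stated in the $N=2$ case in the excerpt) yields the self-improving inequality
\begin{equation*}
\|u\|_{p(1+\frac2N),\ot}^{p}\le c\, c_s^2\, p\int_{\ot}|g|\,|u|^{p-1}\,dxdt.
\end{equation*}
On the right I would split the integral using Hölder with the exponent pair adapted to $\|g\|_{1+\frac N2}\le 1$: writing $\frac N2 = \frac N2$, the conjugate structure forces $|u|^{p-1}$ to be measured in $L^{(p-1)\cdot\frac{N+2}{N}}$ over the region where $u$ is large, which after the interpolation feeds back exactly one extra power of $p$ per iteration step with the exponent advancing by the fixed factor $1+\frac2N$. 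Tracking the constants, one finds $\|u\|_{p_k,\ot}\le (Cp_k)^{?}$ with $p_k=p_0(1+\frac2N)^k$, so that $\|u\|_{p,\ot}\le c\, p$ for all finite $p$ (up to a $\log$-type correction from the borderline exponent, but the crucial point is at most linear growth in $p$). Expanding $e^{\alpha(1+\frac2N)u}=\sum_k \frac{(\alpha(1+\frac2N))^k}{k!}u^k$ and integrating termwise then converges once $\alpha$ is small enough, because $\int_{\ot}u^k\le (ck)^k$ is beaten by $k!\sim (k/e)^k$ times a geometric factor $\alpha^k$; this gives \eqref{expb1}.

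The main obstacle, and what distinguishes this from the elliptic case of \cite{X1}, is making the Moser iteration honest in the presence of the time derivative: the test-function computation only produces $\sup_t\int_\Omega|u|^p$ after one pays attention to the initial slice, and the interpolation step mixing $\sup_t L^p_x$ with $L^2_tH^1_x$ must be applied at every stage with constants that do not blow up super-exponentially in $k$. Concretely, the delicate point is that at the borderline exponent $q=1+\frac N2$ a naive Hölder split loses a factor that degrades the iteration; one must instead localize to the superlevel sets $\{|u|>\text{something}\}$ (or absorb a small portion of $\int|g||u|^{p-1}$ using that $\|g\|_{1+N/2}$ is small after normalization) so that each step gains a genuine geometric improvement rather than merely not losing ground. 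Once the linear-in-$p$ bound $\|u\|_{p,\ot}\le cp$ is secured uniformly, the passage to exponential integrability and the final choice of $\alpha$ small are routine. I would also note that the dependence of $c$ on $\|f\|_{1+\frac2N,\ot}$ (rather than $\|f\|_{1+\frac N2,\ot}$) in the statement is a typo-level artifact of the exponents $N\leftrightarrow 2$ and should be read as the relevant normalizing quantity; it enters only through the initial exponent $p_0$ of the iteration and the volume of $\ot$.
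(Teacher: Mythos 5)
Your overall plan (an $L^p$ iteration giving $\|u\|_{p,\ot}\le Cp$, followed by termwise summation of the exponential series) is a genuinely different route from the paper's, and it can in principle be completed; but as written the decisive step is not resolved, and the two mechanisms you offer for it do not work. First, after the normalization \eqref{key}--\eqref{gn} the norm $\|g\|_{1+\frac{N}{2},\ot}$ is only $\le 1$ (see \eqref{gn1}); it is \emph{not} small, so there is no smallness available from $g$ to absorb a portion of $\int_{\ot}|g||u|^{p-1}dxdt$. Second, localizing to superlevel sets $\{|u|>M\}$ makes $\|g\chi_{\{|u|>M\}}\|_{1+\frac{N}{2},\ot}$ small only through the absolute continuity of the integral of $|g|^{1+\frac{N}{2}}$, i.e.\ at a rate depending on $f$ itself and not merely on its norm; this is incompatible with the stated dependence $c=c(N,\alpha,c_s,\lambda,\ot,\|f\|)$ and would destroy the logarithmic bound \eqref{ppb} that the lemma feeds into. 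Third, your bookkeeping is left literally open (``$\|u\|_{p_k,\ot}\le (Cp_k)^{?}$''), and with exponents advancing geometrically, $p_k=p_0(1+\tfrac2N)^k$, the scheme does not close at the borderline exponent $q=1+\frac N2$ --- which is exactly the difficulty you flag. The iteration that does close is arithmetic, not geometric: testing \eqref{eqnu1} with $|u|^{p-2}u$, using the parabolic Sobolev interpolation and H\"older with the conjugate pair $(1+\tfrac N2,\tfrac{N+2}{N})$ gives $\|u\|_{p\chi,\ot}^{\,p}\le c\,p\,\|u\|_{(p-1)\chi,\ot}^{\,p-1}$ with $\chi=1+\frac2N$; setting $b_p=p\ln\|u\|_{p\chi,\ot}$ one gets $b_p\le b_{p-1}+\ln(cp)$, hence $\|u\|_{p\chi,\ot}\le Cp$ with no logarithmic loss and no absorption, the chain being anchored by the $\sup_t\int_\Omega|u|\,dx$ bound you correctly identify (the paper's \eqref{extr}). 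Only with this recursion pinned down does the series argument for \eqref{expb1} go through for $\alpha$ small.

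For comparison, the paper avoids iteration altogether in this lemma: it uses $e^{\alpha u}-1$ itself as the test function in \eqref{eqnu1} (inequality \eqref{gn4}), applies the $L^\infty_tL^2_x$--$L^2_tW^{1,2}_x$ interpolation to $e^{\frac{\alpha u}{2}}-1$, and bounds $\int_{\ot}|g(e^{\alpha u}-1)|$ via \eqref{gn1} by $2\|e^{\frac{\alpha u}{2}}-1\|^2_{\frac{2(N+2)}{N},\ot}+c$; the quadratic term is then absorbed because its coefficient is of order $\alpha^{1+\frac2N}$ --- the smallness comes from $\alpha$, not from $g$, which is precisely the point your proposal misplaces. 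The exponential test function buys a one-step proof with constants depending only on $N,\alpha,c_s,\lambda,\ot$ and $\|g\|_{1+\frac N2,\ot}\le1$; your route, once repaired as above, buys the same conclusion at the cost of tracking an infinite family of $L^p$ estimates and a Taylor-series summation. (Your observation that the exponent $1+\frac2N$ in the statement's constant should be read as $1+\frac N2$ is reasonable and harmless.)
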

	\begin{proof}
	%	Let $\alpha\in (0, \frac{4\lambda}{c_s^2})$ be
	%	given. 
	First we would like to remark that a weak solution $u$ to \eqref{eqnu1}-\eqref{eqnu3} is unique \cite{LSU} and can be constructed as the limit of a sequence of smooth approximate solutions, which can be obtained by, for example, regularizing the given functions in the problem. Thus without any loss of generality, we may assume that $u$ is a classical solution in our subsequent calculations.
	Let $\alpha>0$ be given.	Using $e^{\alpha u}-1$ 
		%is  , and upon using it legitimate 
		a test
		function in \eqref{eqnu1}, which means that we multiply through the equation by the function and integrate the resulting equation over $\Omega_s\equiv\Omega\times(0,s)$, $T\geq s>0$,  we obtain
		\begin{equation} \label{gn4}
		\io\left(\frac{e^{\alpha u}}{\alpha}-u\right)dx+\frac{4\lambda}{\alpha}\int_{\Omega_s}\left|\nabla e^{\frac{\alpha}{2}u}\right|^2dxdt+\int_{\Omega_s}\omega u\left(e^{\alpha u}-1\right)dxdt\leq \int_{\Omega_s}g(e^{\alpha u}-1)dxdt+\frac{|\Omega|}{\alpha}.\end{equation}
		Note that 
		\begin{eqnarray}
		u\left(e^{\alpha u}-1\right)&\geq& 0,\nonumber\\
		e^{\alpha u}-1
		&=&\left(e^{\frac{\alpha u}{2}}-1\right)^2+2\left(e^{\frac{\alpha
				u}{2}}-1\right),\label{gn2}\\
			\|g\|_{1+\frac{N}{2},\Omega_T}&\leq&1.\label{gn1}
	%	\sup_{0\leq t\leq T}	\|g\|_{\frac{2N}{N+2},\Omega}&=&\frac{\sup_{0\leq t\leq T}\|f\|_{\frac{2N}{N+2},\Omega}}{\|f\|_{1+\frac{N}{2},\ot}}\leq\Omega|^{\frac{N-2}{2N}}.
		\end{eqnarray} 
		Keeping these in mind, we estimate from the
		Sobolev embedding theorem \eqref{sin} that
		\begin{eqnarray}
			\int_{0}^{T}\io\left(e^{\frac{\alpha u}{2}}-1\right)^{2+\frac{4}{N}}dxdt&\leq&	\int_{0}^{T}\left(\io\left(e^{\frac{\alpha u}{2}}-1\right)^{\frac{2N}{N-2}}dx\right)^{\frac{N-2}{N}}\left(\io\left(e^{\frac{\alpha u}{2}}-1\right)^{2}dx\right)^{\frac{2}{N}}dt\nonumber\\
			&\leq &c_s^2\left(\max_{0\leq t\leq T}\io\left(e^{\frac{\alpha u}{2}}-1\right)^{2}dx\right)^{\frac{2}{N}}	\int_{0}^{T}\io|\nabla e^{\frac{\alpha u}{2}}|^2dxdt\nonumber\\
				&\leq &c_s^2\left(\max_{0\leq t\leq T}\io e^{\alpha u}dx\right)^{\frac{2}{N}}	\int_{0}^{T}\io|\nabla e^{\frac{\alpha u}{2}}|^2dxdt\nonumber\\
				&&+c_s^2|\Omega|^{\frac{2}{N}}\int_{0}^{T}\io|\nabla e^{\frac{\alpha u}{2}}|^2dxdt\nonumber\\
				&\leq &\frac{c_s^2\alpha^{1+\frac{2}{N}}}{4\lambda}\left(\int_{\Omega_T}g(e^{\alpha u}-1)dxdt+\max_{0\leq t\leq T}\io udx+\frac{|\Omega|}{\alpha}\right)^{1+\frac{2}{N}}\nonumber\\
					&&+\frac{c_s^2|\Omega|^{\frac{2}{N}}\alpha}{4\lambda}\left(\int_{\Omega_T}g(e^{\alpha u}-1)dxdt+\max_{0\leq t\leq T}\io udx+\frac{|\Omega|}{\alpha}\right)\nonumber\\
					&\leq &\left(\frac{c_s^2\alpha^{1+\frac{2}{N}}}{4\lambda}+\delta\right)\left(\int_{\Omega_T}|g(e^{\alpha u}-1)|dxdt+\max_{0\leq t\leq T}\io |u|dx+\frac{|\Omega|}{\alpha}\right)^{1+\frac{2}{N}}\nonumber\\
					&&+c(\delta),\ \ \ \delta>0.\label{gn3}
		\end{eqnarray}
	With the aid of \eqref{gn1} and \eqref{gn2}, we  estimate 
	%the integral on the right-hand side of \eqref{gn4}  as follows:
			\begin{eqnarray}
			\int_{\Omega_T}|g(e^{\alpha u}-1)|dxdt&\leq&
				\int_{\Omega_T}|g|\left(e^{\frac{\alpha
					u}{2}}-1\right)^2dxdt+2	\int_{\Omega_T}
			|g||e^{\frac{\alpha u}{2}}-1|dxdt\nonumber\\
			&\leq
			&\|g\|_{1+\frac{N}{2},\Omega_T}\left(	\int_{\Omega_T}\left(e^{\frac{\alpha
					u}{2}}-1\right)^{\frac{2(N+2)}{N}}dxdt\right)^{\frac{N}{N+2}}\nonumber\\
			&&
			+2\|g\|_{\frac{2(N+2)}{N+4},\Omega_T}\left(	\int_{\Omega_T}\left(e^{\frac{\alpha
					u}{2}}-1\right)^{\frac{2(N+2)}{N}}dxdt\right)^{\frac{N}{2(N+2)}}\nonumber\\
	&\leq&\|e^{\frac{\alpha
			u}{2}}-1\|^2_{\frac{2(N+2)}{N},\ot}
	+2|\Omega_T|^{\frac{N}{2(N+2)}}\|e^{\frac{\alpha
			u}{2}}-1\|_{\frac{2(N+2)}{N},\ot}\nonumber\\
		&\leq&2\|e^{\frac{\alpha
			u}{2}}-1\|^2_{\frac{2(N+2)}{N},\ot}	+c.
			\end{eqnarray}
			Plugging this into \eqref{gn3}, we obtain
			\begin{eqnarray}
				\int_{0}^{T}\io\left(e^{\frac{\alpha u}{2}}-1\right)^{2+\frac{4}{N}}dxdt
				%&\leq&
			%2^{1+\frac{4}{N}}\left(\frac{c_s\alpha^{1+\frac{2}{N}}}{4\lambda}+\delta\right)\left(\|e^{\frac{\alpha
			%		u}{2}}-1\|^2_{\frac{2(N+2)}{N},\ot}\right)^{1+\frac{2}{N}}\nonumber\\
		%	&&+c\left(\max_{0\leq t\leq T}\io |u|dx\right)^{1+\frac{2}{N}}+c\nonumber\\
			&\leq&
			2^{1+\frac{4}{N}}\left(\frac{c_s\alpha^{1+\frac{2}{N}}}{4\lambda}+\delta\right)\int_{0}^{T}\io\left(e^{\frac{\alpha u}{2}}-1\right)^{2+\frac{4}{N}}dxdt\nonumber\\
			&&+c\left(\max_{0\leq t\leq T}\io |u|dx\right)^{1+\frac{2}{N}}+c.
			\end{eqnarray}
		Thus choosing $\alpha, \delta$ suitably small, 
		%we get
		we can absorb the first term on the right-hand side to the left-hand side to obtain
		\begin{equation}\label{gn6}
		\int_{0}^{T}\io\left(e^{\frac{\alpha u}{2}}-1\right)^{2+\frac{4}{N}}dxdt\leq c\left(\max_{0\leq t\leq T}\io |u|dx\right)^{1+\frac{2}{N}}+c.
		\end{equation}
		%Integrate with respect to $t$ to derive
	%	\begin{eqnarray}
	%	\frac{1}{\alpha}\io e^{\alpha u}dx+\left(\frac{4\lambda}{\alpha}-c_s^{2}\right)\iot\left|\nabla e^{\frac{\alpha}{2}u}\right|^2dxdt&\leq &\io udx+2c_s|\Omega|^{\frac{N-2}{2N}}\tau^{\frac{1}{2}}\left(\iot\left|\nabla e^{\frac{\alpha}{2}u}\right|^2dxdt\right)^{\frac{1}{2}}\nonumber\\
%		&\leq &\io udx+\delta\iot\left|\nabla e^{\frac{\alpha}{2}u}\right|^2dxdt+c(\delta),
		\label{expi1}
		%\end{eqnarray}
	%	where $\Omega_\tau=\Omega\times(0,\tau), \tau\in (0, T]$ and $\delta\in (0, \frac{4\lambda}{\alpha}-c_s^{2})$.
		To estimate the right-hand side of the above inequality, we  define, for $\varepsilon>0$, 
		$$\eta_{\varepsilon}(s)=\left\{\begin{array}{ll}
		1& \mbox{if $s\geq\varepsilon$,}\\
		\frac{1}{\varepsilon}s& \mbox{if $-\varepsilon< s<\varepsilon$,}\\
		-1& \mbox{if $s\leq-\varepsilon$.}
		\end{array}\right.
		$$
		Use $\eta_{\varepsilon}(u)$ as a test function in \eqref{eqnu1} to
		obtain
		\begin{equation}
		\frac{d}{dt}\io\int_{0}^{u}\eta_{\varepsilon}(s)dsdx\leq\io g\eta_{\varepsilon}(u)dx\leq\io|g|dx.
		\end{equation}
		Integrate with respect to $t$ and then let $\varepsilon\rightarrow 0^+$ to yield
		\begin{equation}\label{extr}
		\sup_{0\leq t\leq T}\io |u|dx\leq\ioT|g|dxdt.
		\end{equation}
		Plugging this into \eqref{gn6} gives the desired result.
	\end{proof}
	
	To continue the proof of Theorem \ref{logb}, we assume, without any loss of
	generality, that
	% \eqref{key} holds and 
	\begin{equation}
	\supe_{\ot} u
	= \|u\|_{\infty,\ot}.\label{pmu}
	\end{equation}
	Indeed, if \eqref{pmu} is not true, we multiply through \eqref{eqnu1} by $-1$
	and consider $-u$. 
	%If \eqref{key} is not true, we can prove Lemma \ref{expb} for
	%\eqref{phi11}-\eqref{phi13}. That is, 
	%directly because 
	%we  have that \eqref{expb1} holds for
%	$u=\varphi_1$. 
	%If $N=2$, we deduce from (D1) and Poincar\'{e}'s inequality that
	%\begin{equation}
	%\avint_{B_r(y)}
	%\end{equation}
	Let \begin{equation} v=e^u.\end{equation} Then
	\begin{eqnarray*}
		u &=& \ln v,\\
		A\nabla u &=& \frac{1}{v}A\nabla v,\\
		\mbox{div}(A\nabla u) &=& -\frac{1}{v^2}A\nabla v\cdot\nabla
		v+\frac{1}{v}\mbox{div}(A\nabla v).
	\end{eqnarray*}
	Substitute these into \eqref{eqnu1}-\eqref{eqnu3} to obtain \eqref{eqnv1} coupled with
	\begin{eqnarray}
	v& = & 1 \ \ \mbox{on $\Sigma_T$},\label{eqnv2}\\
	v(x,0) &=& 1 \ \ \mbox{on $\Omega$}.\label{eqnv3}
	\end{eqnarray}
	%	We employ a technique of iteration of $L^q$ norms originally due to Moser \cite{M}. 
	%Without loss of generality, assume This is the key difference between To do this
	%\begin{equation}
%	\|u^+\|_\infty=\|u\|_\infty.
%	\end{equation}
%	Set $a=(\|f\|_q)^{\frac{1}{p-1}}$. For each $s> \frac{p-1}{p}$ we use $\frac{s^p}{ps-p+1}(u^++a)^{ps-p+1}$ as a test function in \eqref{of1} to obtain
	%The above formal calculations can easily be made vigorous.
	%To continue our proof, we s
	Set
	\begin{equation}\label{wp1}
	w=\max\{v,1\}.
	\end{equation}
%	Obviously, we have
%	\begin{equation}
%	w(x,t)=\left\{\begin{array}{ll}
%	v(x,t) & \mbox{if $v(x,t)>1$.}\\
	%1 &\mbox{if $v(x,t)\leq 1$}\end{array}
%	\right. \end{equation}
	%Fix
%	In particular, there holds
%	\begin{equation}\label{wp2}
	%w\geq 1\ \ \mbox{a.e. on $\ot$.}
	%\end{equation}
	For each $\beta>0$ we use $w^\beta-1$ as a test function in \eqref{eqnv1} to
	obtain
	\begin{eqnarray}
	\lefteqn{\frac{d}{dt}\io\int_{0}^{v}(((s-1)^++1)^\beta-1)dsdx}\nonumber\\
	&&+\int_\Omega \beta w^{\beta-1}A(x)\nabla v\cdot\nabla w
	dx
	\leq\int_\Omega gv(w^{\beta }-1)dx
	\leq\int_\Omega|g|w^{\beta+1}dx.
	%\leq\sup_{0\leq t\leq T}\|g\|_{q,\Omega}\|w^{\beta+1}\|_{\frac{q}{q-1},\Omega}.
	\label{log1}
	\end{eqnarray} 
	%Then it is easy to see that and  \eqref{wp2}
	%Observe that
	With \eqref{wp1}  in mind, we can deduce that
	\begin{eqnarray}
	\int_{0}^{v}(((s-1)^++1)^\beta-1)ds&=&\left(\int_{1}^{v}(((s-1)^++1)^\beta-1)ds\right)^+\nonumber\\
	&=&\left(\int_{1}^{v}(s^\beta-1)ds\right)^+\nonumber\\
	&=&\frac{1}{\beta+1}\left(w^{\beta+1}-1\right)-\left(w-1\right).
	\end{eqnarray}
	Using this in \eqref{log1} yields
	\begin{eqnarray}\label{log11}
	\lefteqn{\frac{1}{\beta+1}\io(w^{\beta+1}-1)dx+\frac{4\beta\lambda}{(\beta+1)^2}\iot\left|\nabla w^{\frac{\beta+1}{2}}\right|^2dxdt}\nonumber\\
	&\leq&\iot|g|w^{\beta+1}dxdt+\io(w-1)dx.
	\end{eqnarray}
	%Here we have used the fact that
	%\begin{equation}
	%v(w^{\beta }-1)\leq w^{\beta+1}.
	%\end{equation}
	Let $\eta_{\varepsilon}$ be given as before. We use $\eta_{\varepsilon}(v-1)$ as a test function in \eqref{eqnv1}.
	With the aid of the proof of \eqref{extr}, we arrive at
	\begin{eqnarray}
	\io(w-1)dx=\io(v-1)^+dx\leq\io|v-1|dx\leq\ioT|g|dxdt\leq\ioT|g|w^{\beta+1}dxdt.
	\end{eqnarray}
	The last step is due to the fact that $w\geq 1$ on $\ot$.
	%\eqref{wp2}.
	Plug this into \eqref{log11} to yield
	\begin{equation}\label{log12}
	\frac{1}{\beta+1}\sup_{0\leq t\leq T}\io(w^{\beta+1}-1)dx+\frac{4\beta\lambda}{(\beta+1)^2}\ioT\left|\nabla w^{\frac{\beta+1}{2}}\right|^2dxdt\leq2\ioT|g|w^{\beta+1}dxdt.
	\end{equation}
	Note that
	\begin{eqnarray*}
		w^{\beta+1} -1&=& \left(w^{\frac{\beta+1}{2}}-1+1\right)^2-1\\
		&\leq & 2\left(w^{\frac{\beta+1}{2}}-1\right)^2+1. \end{eqnarray*}
	Keeping this, \eqref{wp1}, the Sobolev embedding
	theorem \eqref{sin}, and \eqref{log12} in mind,  we calculate
	%, with the aid of , 
	that
	\begin{eqnarray}
	\lefteqn{	\int_{0}^{T}\io w^{(\beta+1)(1+\frac{2}{N})}dxdt}\nonumber\\
	&\leq&
	2^{\frac{2}{N}}\int_{0}^{T}\io \left(w^{\beta+1}-1\right)^{1+\frac{2}{N}}dxdt+2^{\frac{2}{N}}\int_{0}^{T}\io w^{\beta+1}dxdt\nonumber\\
	&\leq&
	2^{\frac{2}{N}}\left(\sup_{0\leq t\leq T}\io(w^{\beta+1}-1)dx\right)^{\frac{2}{N}}\int_{0}^{T}\left(\io\left(w^{\beta+1}-1\right)^{\frac{N}{N-2}}dx\right)^{\frac{N-2}{N}}dt\nonumber\\
	&&+2^{\frac{2}{N}}\int_{0}^{T}\io w^{\beta+1}dxdt\nonumber\\
	&\leq&c(\beta+1)^{\frac{2}{N}}\left(\ioT|g|w^{\beta+1}dxdt\right)^{\frac{2}{N}}\int_{0}^{T}\left(\io\left(w^{\frac{\beta+1}{2}}-1\right)^{\frac{2N}{N-2}}dx\right)^{\frac{N-2}{N}}dt\nonumber\\
	&&+c(\beta+1)^{\frac{2}{N}}\left(\ioT|g|w^{\beta+1}dxdt\right)^{\frac{2}{N}}+2^{\frac{2}{N}}\ioT w^{\beta+1}dxdt\nonumber\\
	&\leq&c\frac{(\beta+1)^{\frac{2}{N}+2}}{\beta}\left(\ioT|g|w^{\beta+1}dxdt\right)^{\frac{2}{N}+1}\nonumber\\
	&&+c(\beta+1)^{\frac{2}{N}}\left(\ioT|g|w^{\beta+1}dxdt\right)^{\frac{2}{N}}+2^{\frac{2}{N}}\ioT w^{\beta+1}dxdt.\label{log13}
	\end{eqnarray}
	%Note that h
	%Here 
	%$c$ is independent of $T$ and
	%we have used the fact that
	%$w\geq 1$ on $\ot$. Continuing to utilize this fact, 
	%Fix $q>1+\frac{N}{2}$. 
	We estimate from \eqref{wp1} that
	\begin{eqnarray}
	\left(\ioT w^{\beta+1}dxdt\right)^{\frac{N}{N+2}}&\leq&\left[\left(\ioT w^{\frac{(\beta+1)q}{q-1}}dxdt\right)^{\frac{q-1}{q}}|\ot|^{\frac{1}{q}}\right]^{\frac{N}{N+2}}\nonumber\\
	&=&\left[\left(\avint_{\ot} w^{\frac{(\beta+1)q}{q-1}}dxdt\right)^{\frac{q-1}{q}}|\ot|\right]^{\frac{N}{N+2}}\nonumber\\
	&\leq&|\ot|^{\frac{N}{N+2}-\frac{q-1}{q}}\left(\int_{\ot} w^{\frac{(\beta+1)q}{q-1}}dxdt\right)^{\frac{q-1}{q}}.
	\end{eqnarray}
	Recall that $q>1+\frac{N}{2}$ is given by assumption (H2).
	%where $q>2$ is to be determined. 
	Raising both sides of \eqref{log13} to the $\frac{N}{N+2}$ -th power, we derive that
	\begin{eqnarray*}
		\|w^{\beta+1}\|_{\frac{N+2}{N}, \ot} &\leq &\frac{c(\beta+1)^{\frac{2(N+1)}{N+2}}}{
			\beta^{\frac{N}{N+2}}}\|g\|_{q,\ot}\left(\ioT
		w^{\frac{(\beta+1)q}{q-1}}dxdt\right)^{\frac{q-1}{q}}\nonumber\\
		&&+c(\beta+1)^{\frac{2}{N+2}}|\ot|^{\frac{2(q-1)}{q(N+2)}-\frac{q}{q-1}}\|g\|_{q,\ot}^{\frac{2}{N+2}}\left(\ioT w^{\frac{(\beta+1)q}{q-1}}dxdt\right)^{\frac{q-1}{q}}\nonumber\\
		&&+2^{\frac{2}{N}}|\ot|^{\frac{N}{N+2}-\frac{q-1}{q}}\left(\int_{\ot} w^{\frac{(\beta+1)q}{q-1}}dxdt\right)^{\frac{q-1}{q}}\nonumber\\
		&\leq&\left(c(\beta+1)^{\frac{2(N+1)}{N+2}}\left(\frac{1}{
			\beta^{\frac{N}{N+2}}}+1\right)\|g\|_{q,\ot}+c\right)\|w^{\beta+1}\|_{\frac{q}{q-1},\ot}\nonumber\\
		&\leq&c(\beta+1)^{\frac{2(N+1)}{N+2}}\left(\frac{1}{
			\beta^{\frac{N}{N+2}}}+2\right)\left(\|g\|_{q,\ot}+1\right)\|w^{\beta+1}\|_{\frac{q}{q-1},\ot}.
	\end{eqnarray*}
	%The last step is due to \eqref{gn}.
	Thus we can write the above inequality in the form
	\begin{equation}
	\|w\|_{\frac{(N+2)(\beta+1)}{N}} \leq c^{\frac{1}{\beta+1}}(\beta+1)^{\frac{2(N+1)}{(N+2)(\beta+1)}}\left(\frac{1}{
		\beta^{\frac{N}{N+2}}}+2\right)^{\frac{1}{\beta+1}}\left(\|g\|_{q,\ot}+1\right)^{\frac{1}{\beta+1}}\|w\|_{\frac{q(\beta+1)}{q-1}}.
	\end{equation}
	Now set $$
	%\begin{eqnarray*}
	\chi  =\frac{(N+2)/N}{q/(q-1)}>1.$$ Fix $\beta_0>0$ and let $$
	\beta+1 = (1+\beta_0)\chi^i, \ \  i=0,1,2,\cdots.$$
	%\end{eqnarray*}
	Subsequently,
	\begin{eqnarray*}
		\|w\|_{(1+\beta_0)\frac{q}{q-1}\chi^{i+1},\ot} &\leq &
		\left[c(\|g\|_{q,\ot}+1)\right]^{\frac{1}{(1+\beta_0)\chi^i}}
		\left[(1+\beta_0)\chi^{i}\right]^{\frac{2(N+1)}{(1+\beta_0)(N+2)\chi^i}}\nonumber\\
		&&\cdot\|w\|_{(1+\beta_0)\frac{q}{q-1}\chi^{i},\ot}\\
		&\leq
		&\left[c(\|g\|_{q,\ot}+1)\right]^{\frac{1}{(1+\beta_0)}(\frac{1}{\chi^i}+\cdots
			+1)}(1+\beta_0)^{\frac{2(N+1)}{(1+\beta_0)(N+2)}(\frac{1}{\chi^i}+\cdots
			+1)}\\
		& & \cdot \chi^{\frac{2(N+1)}{(1+\beta_0)(N+2)}(\frac{i}{\chi^i}+\cdots
			+1)}\|w\|_{(1+\beta_0)\frac{q}{q-1},\ot}.
	\end{eqnarray*}
	Taking $i\rightarrow \infty$ yields
	\begin{equation}\label{log14}
	\|w\|_{\infty,\ot}\leq
	c\left(\|g\|_{q,\ot}+1\right)^{\alpha_0}\|w\|_{r,\ot},
	\end{equation}
	where 
	\begin{eqnarray}
	\alpha_0&=&\frac{\chi}{(1+\beta_0)(\chi-1)},\\
	r&=&(1+\beta_0)\frac{q}{q-1}.
	\end{eqnarray}
	%$\alpha_0$ is a positive number depending on $\beta_0, q$.
%	Let
%	$$.$$ 
	In view of Lemma \ref{expb}, we can  find an $\alpha\in(0,r)$ small enough so that
	\begin{equation}\label{log25}
	\ioT w^\alpha dxdt\leq	\ioT e^{\alpha u}dxdt+c\leq c.
	\end{equation}
%	$$r< \frac{4\lambda}{c_s^2}.$$ 
%We may assume that $r\leq\alpha$. If not, 
%pick a number $\alpha$ so
%	that \begin{equation}0<\alpha<\frac{4\lambda}{c_s^2}\leq
%	r.\end{equation} 
%then we have
It follows that
	\begin{eqnarray*}
		\|w\|_{r,\ot} &=&\left(\ioT
		w^{r-\alpha}w^\alpha dx\right)^{\frac{1}{r}}\\
		&\leq &
		\|w\|_{\infty,\ot}^{\frac{r-\alpha}{r}}\|w\|_{\alpha,\ot}^{\frac{\alpha}{r}}.
	\end{eqnarray*}
	This together with  \eqref{log14} gives
	\begin{equation}
	\|w\|_{\infty,\ot} \leq 
	c\left(\|g\|_{q,\ot}+1\right)^{\alpha_0}\|w\|_{\infty,\ot}^{\frac{r-\alpha}{r}}\|w\|_{\alpha,\ot}^{\frac{\alpha}{r}}
	%\\
	%	&\leq & \frac{1}{2}\|w\|_{\infty,\ot}
	%	+c\left(\|g\|_{q,\ot}+1\right)^{\alpha_0\frac{r}{\alpha}}\|w\|_{\alpha,\ot}
	\end{equation}
	from whence follows
	\begin{equation}
	\|w\|_{\infty,\ot} \leq
	c\left(\|g\|_{q,\ot}+1\right)^{\alpha_0\frac{r}{\alpha}}\|w\|_{\alpha,\ot}.
	\end{equation}
	Recall the definition of $w$ and use \eqref{log25} to obtain
	\begin{eqnarray}
	\|v\|_{\infty ,\ot}\leq\|w\|_{\infty ,\ot}&\leq&
	c\left(\|g\|_{q,\ot}+1\right)^{\alpha_0\frac{r}{\alpha}}(\|v\|_{\alpha,\ot}+|\ot|^{\frac{1}{\alpha}})\nonumber\\
	&\leq&c\left(\|g\|_{q,\ot}+1\right)^{\alpha_0\frac{r}{\alpha}}(\|g\|_{1,\ot}^{\frac{1}{\alpha}}+c)\nonumber\\
	&\leq&c\left(\|g\|_{q,\ot}+1\right)^{\alpha_0\frac{r}{\alpha}+\frac{1}{\alpha}}.
	\end{eqnarray}
	%In view of (1.24), (1.34), (1.22) and (1.19), we obtain
	That is,
	\begin{equation}
	e^{\|u\|_{\infty,\ot}}\leq
	c\left(\|g\|_{q}+1\right)^{\alpha_0\frac{r}{\alpha}+\frac{1}{\alpha}}.
	\end{equation}
	Take the logarithm and make a note of \eqref{key} to get
	\begin{equation}
	\|u\|_{\infty,\ot}\leq c(\ln (\|f\|_{q,\ot}+1)+1).
	\end{equation}
	Finally, we derive \begin{eqnarray*} \|\varphi\|_\infty &\leq
		&\|\varphi_1\|_\infty+\|\varphi_2\|_\infty\\
		&\leq & c\|f\|_{1+\frac{N}{2},\ot}(\ln (\|f\|_{q,\ot}+1)+1)
		+\|\varphi_0\|_\infty.
	\end{eqnarray*}
	This completes the proof.
\end{proof}

\end{document}